\newtheorem{thm}{Theorem}
\newtheorem{defn}[thm]{Definition}
\newtheorem{prob}[thm]{Problem}
\begin{document}

\title{Certifying the Restricted Isometry Property is Hard}

\author{Afonso S.~Bandeira, Edgar Dobriban, Dustin G.~Mixon, William F.~Sawin
\thanks{A.S.~Bandeira is with the Program in Applied and Computational Mathematics, Princeton University, Princeton, New Jersey 08544 USA (e-mail: ajsb@math.princeton.edu).}% <-this % stops a space
\thanks{E.~Dobriban is with the Department of Statistics, Stanford University, Stanford, California 94305 USA (e-mail: dobriban@stanford.edu).}% <-this % stops a space
\thanks{D.G.~Mixon is with the Department of Mathematics and Statistics, Air Force Institute of Technology, Wright-Patterson AFB, Ohio 45433 USA (e-mail: dustin.mixon@afit.edu).}% <-this % stops a space
\thanks{W.F.~Sawin is with the Department of Mathematics, Princeton University, Princeton, New Jersey 08544 USA (e-mail: wsawin@math.princeton.edu).}% <-this % stops a space
\thanks{The authors thank Boris Alexeev for reading this manuscript and providing thoughtful comments and suggestions.
Bandeira was supported by NSF Grant No.~DMS-0914892, and Mixon was supported by the A.B.~Krongard Fellowship. The views expressed in this article are those of the authors and do not reflect the official policy or position of the United States Air Force, Department of Defense, or the U.S. Government.}%
}
\maketitle

\begin{abstract}
This paper is concerned with an important matrix condition in compressed sensing known as the restricted isometry property (RIP).
We demonstrate that testing whether a matrix satisfies RIP is $\NP$-hard.
As a consequence of our result, it is impossible to efficiently test for RIP provided $\P\neq\NP$.
\end{abstract}

\section{Introduction}

It is now well known that compressed sensing offers a method of taking few sensing measurements of high-dimensional sparse vectors, while at the same time enabling efficient and stable reconstruction~\cite{CandesT:it05}.
In this field, the restricted isometry property is arguably the most popular condition to impose on the sensing matrix in order to acquire state-of-the-art reconstruction guarantees:

\begin{defn}
\label{defn.rip}
We say a matrix $\Phi$ satisfies the $(K,\delta)$-\textit{restricted isometry property (RIP)} if
\begin{equation*}
(1-\delta)\|x\|^2
\leq\|\Phi x\|^2
\leq(1+\delta)\|x\|^2
\end{equation*}
for every vector $x$ with at most $K$ nonzero entries.
\end{defn}

To date, RIP-based reconstruction guarantees exist for Basis Pursuit~\cite{Candes:08}, CoSaMP~\cite{NeedellT:09} and Iterative Hard Thresholding~\cite{BlumensathD:09}, and the ubiquitous utility of RIP has made the construction of RIP matrices a subject of active research~\cite{BandeiraFMW:arxiv12}--\cite{Tao:07}.
Here, random matrices have found much more success than deterministic constructions~\cite{BandeiraFMW:arxiv12}, but this success is with high probability, meaning there is some (small) chance of failure in the construction.
Furthermore, RIP is a statement about the conditioning of all $\binom{N}{K}$ submatrices of an $M\times N$ sensing matrix, and so it seems computationally intractable to check whether a given instance of a random matrix fails to satisfy RIP; it is widely conjectured that certifying RIP for an arbitrary matrix is $\NP$-hard.
In the present paper, we prove this conjecture.

\begin{prob}
\label{prob.rip}
Given a matrix $\Phi$, a positive integer $K$, and some $\delta\in(0,1)$, does $\Phi$ satisfy the $(K,\delta)$-restricted isometry property?
\end{prob}

In short, we show that any efficient method of solving Problem~\ref{prob.rip} can be called in an algorithm that efficiently solves the $\NP$-complete subset sum problem.
As a consequence of our result, there is no method by which one can efficiently test for RIP provided $\P\neq\NP$.
This contrasts with previous work~\cite{KoiranZ:11}, in which the reported hardness results are based on less-established assumptions on the complexity of dense subgraph problems.

In the next section, we review the basic concepts we will use from computational complexity, and Section~3 contains our main result.

\section{A brief review of computational complexity}

In complexity theory, problems are categorized into complexity classes according to the amount of resources required to solve them.
For example, the complexity class $\P$ contains all problems which can be solved in polynomial time, while problems in $\EXP$ may require as much as exponential time.
Problems in $\NP$ have the defining quality that solutions can be verified in polynomial time given a certificate for the answer.
As an example, the graph isomorphism problem is in $\NP$ because, given an isomorphism between graphs (a certificate), one can verify that the isomorphism is legitimate in polynomial time.
Clearly, $\P\subseteq\NP$, since we can ignore the certificate and still solve the problem in polynomial time.

While problem categories provide one way to describe complexity, another important tool is the \textit{polynomial-time reduction}, which allows one to show that a given problem is ``more complex'' than another.
To be precise, a polynomial-time reduction from problem $A$ to problem $B$ is a polynomial-time algorithm that solves problem $A$ by exploiting an oracle which solves problem $B$; the reduction indicates that solving problem $A$ is no harder than solving problem $B$ (up to polynomial factors in time), and we say ``$A$ reduces to $B$,'' or $A\leq B$.
Such reductions lead to some of the most popular definitions in complexity theory:  
We say a problem $B$ is called $\NP$-\textit{hard} if every problem $A$ in $\NP$ reduces to $B$, and a problem is called $\NP$-\textit{complete} if it is both $\NP$-hard and in $\NP$.
In plain speak, $\NP$-hard problems are harder than every problem in $\NP$, while $\NP$-complete problems are the hardest of problems in $\NP$.

Contrary to popular intuition, $\NP$-hard problems are not merely problems that seem to require a lot of computation to solve.
Of course, $\NP$-hard problems have this quality, as an $\NP$-hard problem can be solved in polynomial time only if $\P=\NP$; this is an open problem, but it is widely believed that $\P\neq\NP$~\cite{Cook:online}.
However, there are other problems which seem hard but are not known to be $\NP$-hard (e.g., the graph isomorphism problem).
As such, while testing for RIP in the general case seems to be computationally intensive, it is not obvious whether the problem is actually $\NP$-hard.
Indeed, by the definition of $\NP$-hard, one must compare its complexity to the complexity of every problem in $\NP$.
To this end, notice that $A\leq B$ and $B\leq C$ together imply $A\leq C$, and so to demonstrate that a problem $C$ is $\NP$-hard, it suffices to show that $B\leq C$ for some $\NP$-hard problem $B$.

In the present paper, we demonstrate the hardness of certifying RIP by reducing from the following problem:

\begin{prob}
\label{prob.spark}
Given a matrix $\Psi$ and some positive integer $K$, do there exist $K$ columns of $\Psi$ which are linearly dependent?
\end{prob}

Problem~\ref{prob.spark} has a brief history in computational complexity.
First, McCormick~\cite{McCormick:phd83} demonstrated that the analogous problem of testing the girth of a transversal matroid is $\NP$-complete, and so by invoking the randomized matroid representation of Marx~\cite{Marx:tcs09}, Problem~\ref{prob.spark} is hard for $\NP$ under randomized reductions~\cite{AlexeevCM:arxiv11}.
Next, Khachiyan~\cite{Khachiyan:jc95} showed that the problem is $\NP$-hard by focusing on the case where $K$ equals the number of rows of $\Psi$; using a particular matrix construction with Vandermonde components, he reduced this instance of the problem to the subset sum problem.
Recently, Tillmann and Pfetsch~\cite{TillmannP:arxiv12} used ideas similar to McCormick's to strengthen Khachiyan's result: they prove Problem~\ref{prob.spark} is $\NP$-hard without focusing on such a specific instance of the problem.
Each of these complexity results use $M\times N$ matrices with integer entries whose binary representations take $\leq p(M,N)$ bits for some polynomial $p$; we will exploit this feature in our proof.

%In the next section, we reduce Problem~\ref{prob.spark} to Problem~\ref{prob.rip}, thereby establishing that certifying the restricted isometry property is $\NP$-hard.

\section{Main result}

\begin{thm}
\label{thm.rip}
Problem~\ref{prob.rip} is $\NP$-hard.
\end{thm}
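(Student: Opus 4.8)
The plan is to give a polynomial-time reduction from Problem~\ref{prob.spark}, which is $\NP$-hard by the results cited above (see~\cite{Khachiyan:jc95}, \cite{TillmannP:arxiv12}), to Problem~\ref{prob.rip}. The underlying observation is elementary: a matrix $\Phi$ satisfies the $(K,\delta)$-restricted isometry property exactly when every $M\times K$ column-submatrix $\Phi_S$ (with $|S|=K$) has all of its squared singular values in $[1-\delta,1+\delta]$; in particular a RIP matrix has the property that every $K$ of its columns are linearly independent. So, given an instance $(\Psi,K)$ of Problem~\ref{prob.spark} in which $\Psi$ is an $M\times N$ integer matrix whose entries have binary representations of length at most $p(M,N)$, it suffices to construct in polynomial time a rational matrix $\Phi$ and a number $\delta\in(0,1)$ of polynomial bit-length such that $\Phi$ satisfies the $(K,\delta)$-restricted isometry property if and only if no $K$ columns of $\Psi$ are linearly dependent. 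Given an oracle for Problem~\ref{prob.rip}, one then solves Problem~\ref{prob.spark} on $(\Psi,K)$ by querying the oracle on $(\Phi,K,\delta)$ and negating its answer.

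Everything hinges on separating the two cases by a quantitative, explicitly computable spectral gap, and this is where the integrality of $\Psi$ is exploited. Fix $S\subseteq\{1,\dots,N\}$ with $|S|=K$ and put $A=\Psi_S$. If the columns of $A$ are linearly dependent, then $A^{\top}A$ is singular, so $0$ is an eigenvalue. If instead the columns of $A$ are linearly independent, then $A^{\top}A$ is a positive definite matrix with integer entries, so $\det(A^{\top}A)$ is a positive integer and hence $\det(A^{\top}A)\geq 1$; since every eigenvalue of $A^{\top}A$ is at most $\operatorname{tr}(A^{\top}A)=\|A\|_F^2$, the smallest eigenvalue is at least $\|A\|_F^{-2(K-1)}$. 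Using the bound $p(M,N)$ on the bit-length of the entries of $\Psi$, one computes in polynomial time a positive integer $T\geq 2$ with $\|\Psi_S\|_F^2\leq T$ for every $K$-subset $S$ (for instance $T=\|\Psi\|_F^2$, after first replacing $\Psi$ by $2\Psi$ if necessary --- which alters no linear dependence among columns --- and disposing of the trivial case $\Psi=0$). Thus for each $K$-subset $S$ the eigenvalues of $(\Psi_S)^{\top}\Psi_S$ lie in $\{0\}\cup[T^{-(K-1)},T]$, with $0$ occurring precisely when the columns of $\Psi_S$ are linearly dependent.

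Now set $\Phi:=T^{-1}\Psi$ and $\delta:=1-T^{-(K+1)}$. Then $\Phi$ is a rational matrix of polynomial bit-length, $\delta$ lies in $(0,1)$ and has polynomial bit-length (because $T^{K+1}$ does), and
\[
(\Phi_S)^{\top}\Phi_S \;=\; T^{-2}\,(\Psi_S)^{\top}\Psi_S
\]
for every $S$. If some $K$ columns of $\Psi$ are linearly dependent, then for the corresponding $S$ the matrix $(\Phi_S)^{\top}\Phi_S$ has $0$ as an eigenvalue, which is strictly less than $1-\delta>0$; hence (taking a unit vector $x$ supported on $S$ in the kernel of $\Phi_S$) the lower RIP inequality fails and $\Phi$ is not $(K,\delta)$-RIP. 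Conversely, if every $K$ columns of $\Psi$ are linearly independent, then for each $K$-subset $S$ the eigenvalues of $(\Phi_S)^{\top}\Phi_S$ lie in $[T^{-(K+1)},T^{-1}]$, which is contained in $[1-\delta,1+\delta]$ since $1-\delta=T^{-(K+1)}$ and $T^{-1}\leq 1\leq 1+\delta$; hence $\Phi$ is $(K,\delta)$-RIP. This is the required equivalence, so the reduction is complete.

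The soft half of the argument --- that RIP forces linear independence of every $K$-subset of columns --- is immediate and is essentially the content of earlier informal discussions of the problem, but by itself it yields no reduction, because RIP imposes a \emph{two-sided} spectral constraint with a \emph{fixed} $\delta<1$ rather than a bare nonsingularity condition. The real work, and the main obstacle, is to manufacture a single spectral gap that (i) is respected by every linearly independent $K$-subset simultaneously, (ii) is violated by every dependent one, and (iii) translates into a legitimate input $(\Phi,K,\delta)$ of polynomial size with $\delta\in(0,1)$. Points (i) and (ii) are exactly where the integrality of $\Psi$ (and the polynomial bound on its bit-length) enter, through the trivial but crucial fact that a nonzero integer has absolute value at least $1$; point (iii) is why we rescale by the rational $T^{-1}$ rather than by the irrational $T^{-1/2}$, and why we must check that $\delta=1-T^{-(K+1)}$ stays bounded away from $1$ and admits a short binary representation --- which it does precisely because the entries of $\Psi$ are integers of controlled size.
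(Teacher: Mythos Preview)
Your proof is correct and follows essentially the same route as the paper's: reduce from the spark problem, exploit integrality of $\Psi$ to get $\det(\Psi_S^{\top}\Psi_S)\geq 1$ whenever the columns are independent, turn this into a uniform lower bound on $\lambda_{\min}$ via an upper bound on $\lambda_{\max}$, then rescale and choose $\delta$ accordingly. The only cosmetic differences are that you bound $\lambda_{\max}$ by the trace $\|A\|_F^2$ whereas the paper uses $K\|\Psi_S^{\top}\Psi_S\|_{\max}$, and your scaling constant and $\delta$ are expressed in terms of $T=\|\Psi\|_F^2$ rather than a power of two derived from the max entry; these choices are interchangeable and the architecture of the argument is identical.
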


\begin{proof}
Reducing from Problem~\ref{prob.spark}, suppose we are given a matrix $\Psi$ with integer entries.
Letting $\mathrm{Spark}(\Psi)$ denote the size of the smallest collection of linearly dependent columns of $\Psi$, we wish to determine whether $\mathrm{Spark}(\Psi)\leq K$.
To this end, we take $P\leq2^{p(M,N)}$ to be the size of the largest entry in $\Psi$, and define $C=2^{\lceil \log_2 \sqrt{MN}P\rceil}$ and $\Phi=\frac{1}{C}\Psi$; note that we choose $C$ to be of this form instead of $\sqrt{MN}P$ to ensure that the entries of $\Phi$ can be expressed in $\poly(M,N)$ bits without truncation.
Of course, linear dependence between columns is not affected by scaling, and so testing $\Phi$ is equivalent to testing $\Psi$. 
In fact, since we plan to appeal to an RIP oracle, it is better to test $\Phi$ since the right-hand inequality of Definition~\ref{defn.rip} is already satisfied for every $\delta>0$:
\begin{align*}
%\label{eq.rhs rip}
\|\Phi\|_2
\leq\sqrt{MN}\|\Phi\|_{\mathrm{max}}
=\sqrt{MN}\frac{P}{C}
\leq1
\leq\sqrt{1+\delta}.
\end{align*}
We are now ready to state the remainder of our reduction:
For some value of $\delta$ (which we will determine later), ask the oracle if $\Phi$ is $(K,\delta)$-RIP; then
\begin{equation*}
\begin{array}{rcll}
\mbox{(i)} & \mbox{$\Phi$ is $(K,\delta)$-RIP} & \Longrightarrow & \mathrm{Spark}(\Psi)>K,\\
\mbox{(ii)} & \mbox{$\Phi$ is not $(K,\delta)$-RIP} & \Longrightarrow & \mathrm{Spark}(\Psi)\leq K.
\end{array}
\end{equation*}
The remainder of this proof will demonstrate (i) and (ii).

Note that (i) immediately holds for all choices of $\delta\in(0,1)$ by the contrapositive.
Indeed, $\mathrm{Spark}(\Psi)\leq K$ implies the existence of a nonzero vector $x$ in the nullspace of $\Phi$ with $\leq K$ nonzero entries, and $\|\Phi x\|^2=0<(1-\delta)\|x\|^2$ violates the left-hand inequality of Definition~\ref{defn.rip}.
For (ii), we also consider the contrapositive.
When $\mathrm{Spark}(\Psi)>K$, we have that every size-$K$ subcollection of $\Psi$'s columns is linearly independent.
Letting $\Psi_\mathcal{K}$ denote the submatrix of columns indexed by a size-$K$ subset $\mathcal{K}\subseteq\{1,\ldots,N\}$, this implies that $\lambda_\mathrm{min}(\Psi_\mathcal{K}^*\Psi_\mathcal{K}^{})>0$, and so $\det(\Psi_\mathcal{K}^*\Psi_\mathcal{K}^{})>0$.
Since the entries of $\Psi$ lie in $\{-P,\ldots,P\}$, we know the entries of $\Psi_\mathcal{K}^*\Psi_\mathcal{K}^{}$ lie in $\{-MP^2,\ldots,MP^2\}$, and since $\Psi_\mathcal{K}^*\Psi_\mathcal{K}^{}$ is integral with positive determinant, we must have $\det(\Psi_\mathcal{K}^*\Psi_\mathcal{K}^{})\geq1$.
In fact,
\begin{align*}
1
&\leq \det(\Psi_\mathcal{K}^*\Psi_\mathcal{K}^{})\\
&=\prod_{k=1}^K\lambda_k(\Psi_\mathcal{K}^*\Psi_\mathcal{K}^{})\\
&\leq \lambda_\mathrm{min}(\Psi_\mathcal{K}^*\Psi_\mathcal{K}^{})\cdot\lambda_\mathrm{max}(\Psi_\mathcal{K}^*\Psi_\mathcal{K}^{})^{K-1}\\
&\leq \lambda_\mathrm{min}(\Psi_\mathcal{K}^*\Psi_\mathcal{K}^{})\cdot\big(K\|\Psi_\mathcal{K}^*\Psi_\mathcal{K}^{}\|_{\mathrm{max}}\big)^{K-1},
\end{align*}
and so we can rearrange to get
\begin{align*}
\lambda_\mathrm{min}(\Phi_\mathcal{K}^*\Phi_\mathcal{K}^{})
&=\frac{1}{C^2}\lambda_\mathrm{min}(\Psi_\mathcal{K}^*\Psi_\mathcal{K}^{})\\
&\geq\frac{1}{C^2(KMP^2)^{K-1}}
\geq 2^{-5MNp(M,N)},
\end{align*}
where the last inequality follows from $K\leq M\leq N$ and other coarse bounds.
Therefore, if we pick $\delta:=1-2^{-5MNp(M,N)}$, then since our choice for $\mathcal{K}$ was arbitrary, we conclude that $\Phi$ is $(K,\delta)$-RIP whenever $\mathrm{Spark}(\Psi)>K$, as desired.
Moreover, since $\delta$ can be expressed in the standard representation using $\poly(M,N)$ bits, we can ask the oracle our question in polynomial time.
\end{proof}

It is important to note that Theorem~\ref{thm.rip} is a statement about testing for RIP in the worst case; this result does not rule out the existence of matrices for which RIP is easily verified (e.g., using coherence in conjunction with the Gershgorin circle theorem for small values of $K$~\cite{BandeiraFMW:arxiv12}).

\end{document}